\newcounter{notes}%
\definecolor{darkgreen}{rgb}{0.0, 0.5, 0.0}
\newtheorem{theorem}{Theorem}[section]
\newtheorem{lemma}[theorem]{Lemma}
\newtheorem{definition}[theorem]{Definition} 
\newtheorem{proposition}[theorem]{Proposition}
\def\Rep{\operatorname{Rep}}
\def\gap{\vspace{.3cm}\noindent}
\def\smallskip{\vspace{.15cm}}
\def\medskip{\vspace{.3cm}}
\def\text{\mbox}
\def\rh2{{\mathbb R}{\mathbb H}^2}
\def\ch2{{\mathbb C}{\mathbb H}^2}
\def\RR{{\mathbb R}}
\def\O{\operatorname{O}}
\def\ZZ{{\mathbb Z}}
\def\SS{{\mathbb S}}
\def\Fr{\operatorname{Fr}}
\def\interior{\operatorname{int}}
\def\SL{\operatorname{SL}}
\def\PGL{\operatorname{PGL}}
\def\GL{\operatorname{GL}}
\def\Hom{\operatorname{Hom}}
\def\H2R{{\mathbb H}^2\times {\mathbb R}}
\def\cl{\operatorname{cl}}
\def\C2{\operatorname{C^2}}
\def\Ccal{\mathcal C}
\def\boxset{\mathcal B}
\def\Pcal{\mathcal P}
\def\Dcal{\mathcal D}
\def\Xcal{\mathcal X}
\def\boxset{\mathfrak B}
\def\PO{\operatorname{PO}}
\def\bdy{\partial}
\def\RP{\operatorname{\mathbb RP}}
\def\vol{\operatorname{vol}}
\def\cm{\hat{\mu}}
\def\Diag{\operatorname{Diag}}
\definecolor{back}{RGB}{255,255,255}
\definecolor{fore}{RGB}{0,0,0}
\definecolor{title}{RGB}{255,0,90}
\definecolor{green}{rgb}{0.0, 0.5, 0.0}
\definecolor{purple}{rgb}{0.5, 0.0, 0.5}
\definecolor{bluegreen}{rgb}{0.0,0.5, 0.5}
\definecolor{orange}{rgb}{1,0.5, 0.1}
\definecolor{redgreen}{rgb}{0.5, 0.5, 0.0}
\def\green{\color{green}}
\def\dvol{\operatorname{dvol}}
\def\VFG{\operatorname{VFG}}
\def\green{\color{green}}
\def\g2{{\green 2}}
\newcommand{\bv}{\left[\begin{array}{c}}
\newcommand{\ev}{\end{array}\right]}
\newcommand{\bbmat}{\begin{bmatrix}} 
\newcommand{\ebmat}{\end{bmatrix}}
\newcommand{\bmat}{\begin{matrix}} 
\newcommand{\emat}{\end{matrix}}
\newcommand{\bpmat}{\begin{pmatrix}} 
\newcommand{\epmat}{\end{pmatrix}}
\begin{document}
\title{The space of properly-convex structures}
\date{\today}

\noindent\address{DC: Department of Mathematics, University of California, Santa Barbara, CA 93106, UlSA}\\
\address{ST: School of Mathematics and Statistics, The University of Sydney, NSW 2006, Australia}
\address{}\\
\email{cooper@math.ucsb.edu}\\
\email{stephan.tillmann@sydney.edu.au}

\author{Daryl Cooper  and Stephan Tillmann}
\begin{abstract} Suppose $G$ is finitely generated  group
and $\Ccal(G)$ consists
of all $\rho:G\to\PGL(n+1,\RR)$ for which there exists a properly convex
set in $\RP^n$ that is preserved by $\rho(G)$.
Then the
 image of $\Ccal(G)$ is closed in the character variety.

Suppose $G$ does not contain an infinite, normal, abelian subgroup
 and $\Dcal(G)\subset\Ccal(G)$ is the subset of holonomies
 of properly-convex $n$-manifolds with fundamental group $G$. Then the
 image $\Dcal(G)$ is closed in the character variety.

If $M$ is the interior of a compact $n$-manifold and $G=\pi_1M$ is as above,
and either $M$ is closed, or $\pi_1M$  contains a subgroup of infinite index
isomorphic to $\ZZ^{n-1}$, then
 $\Dcal(G)$ is closed.
If, in addition, $M$
 is the interior of a compact manifold $N$ such that every component of $\bdy N$ is $\pi_1$-injective,
 and  finitely
 covered by a torus, 
 then every element of $\Dcal(G)$ is the holonomy of a properly-convex structure on $M$,
and $\Dcal(G)$ is a union of connected components of a semi-algebraic set.
\end{abstract}
\maketitle

If $M$ is an $n$-manifold let $\Rep(M)=\Hom(\pi_1M,\PGL(n+1,\RR))$. This paper concerns the question of when is the subset of $\Rep(M)$ consisting of holonomies of
properly-convex real-projective structures on $M$  a closed subset of $\Rep(M)$ ? This was proved for closed surfaces in \cite{CW2}, \cite{ChoiGoldDef}
and for closed hyperbolic 3-manifolds in \cite{Kim}, and for closed manifolds that have trivial virtual center in \cite{Ben5}.
Marseglia \cite{Marseglia} proved it for strictly convex manifolds of finite Busemann measure. This implies
the ends are projectively equivalent to cusps of hyperbolic manifolds, \cite{CLT1}. His proof
follows the strategy of Benoist.
See \cite{MR3888622} for a survey.

The most general statement concerning the closure of the set of holonomies of properly convex structures
requires allowing homotopy equivalent, rather than homeomorphic, quotients in view of the following.
There is a $3$-manifold
$M$ that is the interior of a compact manifold, and
a sequence of representations 
 in $\Rep(M)$ that are holonomies of hyperbolic structures on $M$ and that converge to the holonomy of
 a hyperbolic structure
on a manifold $N$ that is homotopy equivalent, but not homeomorphic, to $M$; see  \cite{MR1411128}. 

Let  $\Dcal(M)$ be the subset of $\Hom(\pi_1M,\PGL(n+1,\RR))$ consisting
of discrete, faithful, torsion-free representations that preserve at least one properly convex set. The quotient of the domain is a
properly convex manifold that is homotopy equivalent to $M$.

\begin{theorem}\label{closed3} 
Suppose $M$  is
a properly convex $n$-manifold and  $\pi_1M$ is finitely generated and contains no non-trivial
normal abelian subgroup.
Suppose that either $M$ is closed, or $\pi_1M$  contains a subgroup $G\cong\ZZ^{n-1}$ of infinite index.

Then $\Dcal(M)$ is closed in $\Hom(\pi_1M,\PGL(n+1,\RR))$.\end{theorem}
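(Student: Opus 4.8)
The plan is to bootstrap from the closure of the image of $\Dcal(M)$ in the character variety—the second assertion of the abstract, which applies because "no non-trivial normal abelian subgroup" is stronger than "no infinite normal abelian subgroup"—to closure of $\Dcal(M)$ itself in $\Hom(\pi_1M,\PGL(n+1,\RR))$. Write $\pi\colon\Hom\to X$ for the continuous map to the character variety, regarded as the GIT quotient $\Hom/\!\!/\PGL$, so that every fiber $\pi^{-1}(\chi)$ contains a unique closed orbit, namely that of the semisimple representation with character $\chi$. Given $\rho_i\in\Dcal(M)$ with $\rho_i\to\rho$ in $\Hom$, continuity gives $\pi(\rho_i)\to\pi(\rho)$, and closedness of the image of $\Dcal(M)$ produces some $\rho^*\in\Dcal(M)$ with $\pi(\rho)=\pi(\rho^*)$. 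Everything then reduces to promoting the equality of characters $\pi(\rho)=\pi(\rho^*)$ to genuine membership $\rho\in\Dcal(M)$.

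The first ingredient is a structural lemma: every representation in $\Dcal(M)$ is semisimple. This is where the absence of normal abelian subgroups is spent. If some $\rho\in\Dcal(M)$ preserving a properly convex $\Omega$ were reducible but not completely reducible, its invariant flag would be non-split, and the unipotent part of the radical of $\overline{\rho(\pi_1M)}$ would be non-trivial; because $\rho$ is discrete, faithful, and preserves $\Omega$, this unipotent/translation part is realized inside $\rho(\pi_1M)$ and generates a non-trivial normal abelian subgroup of $\pi_1M$, contradicting the hypothesis. Hence $\Dcal(M)$ lies in the semisimple locus $\Hom^{ss}$, on which two representations have the same character if and only if they are conjugate. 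Therefore it suffices to show that the limit $\rho$ is itself semisimple: then $\pi(\rho)=\pi(\rho^*)$ forces $\rho$ conjugate to $\rho^*$, and $\PGL$-invariance of $\Dcal(M)$ yields $\rho\in\Dcal(M)$.

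Proving that the limit is semisimple is the main obstacle, and it is here that the two alternative hypotheses enter. Apply Blaschke selection to the invariant domains to obtain a $\rho$-invariant closed convex limit $\Omega$; the danger is that $\Omega$ degenerates—loses dimension or ceases to be properly convex—in which case $\rho$ could be a non-semisimple (parabolic) representation lying in the fiber over $\pi(\rho^*)$ but outside $\Dcal(M)$. When $M$ is closed, the cocompactness of the $\rho_i$-actions gives a uniform lower bound on the injectivity radius (no collapse), so $\Omega$ stays properly convex. When instead $\pi_1M$ contains a $\ZZ^{n-1}$ of infinite index, this peripheral subgroup controls the ends as generalized cusps: its translation directions persist in the limit and keep the cusp from opening up or degenerating, again preserving proper convexity of $\Omega$. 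Once $\Omega$ is properly convex one checks, as is standard for convex-projective holonomies, that the $\rho$-action is proper and free, so $\rho$ is discrete, faithful, and torsion-free with $\Omega/\rho(\pi_1M)$ a properly convex manifold homotopy equivalent to $M$; in particular the structural lemma applies to $\rho$ and makes it semisimple, closing the argument. The entire weight of the proof thus rests on the non-degeneration step, where "no normal abelian subgroup" excludes a limiting normal unipotent part and "closed, or contains $\ZZ^{n-1}$ of infinite index" prevents the invariant convex set from collapsing.
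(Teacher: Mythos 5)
Your proposal follows the same broad skeleton as the paper (reduce to closedness in the character variety, then promote equality of characters to conjugacy), but both of the steps that carry the actual weight have genuine gaps. First, your ``structural lemma'' is not proved. Discreteness and faithfulness of a representation do \emph{not} imply that the unipotent radical of the Zariski closure of $\rho(\pi_1M)$ is ``realized inside'' $\rho(\pi_1M)$: a discrete linear group can have Zariski closure with non-trivial unipotent radical while containing no non-trivial unipotent element at all (e.g.\ a Schottky group in $\SL(2,\RR)$ deformed by a non-coboundary translation cocycle inside $\SL(2,\RR)\ltimes\RR^2$), so a non-split invariant flag does not produce a normal abelian subgroup of $\pi_1M$. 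The paper never claims semisimplicity of elements of $\Dcal(M)$ from the no-normal-abelian hypothesis alone; it invokes \cite{CT2}(0.2) to get \emph{irreducibility}, and that theorem needs the second hypothesis (closed, or $\ZZ^{n-1}$ of infinite index) as well. That hypothesis is not decorative: a closed surface group acting block-diagonally on the open cone in $\RP^3$ over a convex domain in $\RP^2$ is a reducible holonomy of a properly convex $3$-manifold whose fundamental group has no non-trivial normal abelian subgroup. In your proposal the second hypothesis is spent only on vague geometric remarks, and in the $\ZZ^{n-1}$ case those remarks misread the statement: in Theorem (\ref{closed3}) the subgroup $G\cong\ZZ^{n-1}$ is an abstract subgroup of infinite index, not a peripheral one, and the ends of $M$ are not assumed to be generalized cusps (those hypotheses belong to Theorem (\ref{closed4})); its role is purely algebraic, via \cite{CT2}(0.2). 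Irreducibility is also what lets the paper avoid proving anything about the limit $\rho_\infty$ directly: if $\chi(\rho_\infty)=\chi(\sigma)$ with $\sigma$ irreducible, then the semisimplification of $\rho_\infty$ is conjugate to $\sigma$, which forces $\rho_\infty$ itself to be irreducible and conjugate to $\sigma$. With only ``semisimple'' in place of ``irreducible'', as in your scheme, equal characters do not pin down $\rho$ --- a non-semisimple $\rho$ can have semisimplification $\rho^*$ --- which is exactly why you are pushed into the second, larger gap.

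That second gap is the non-degeneration step, which you correctly call the main obstacle and then assert rather than prove. ``Cocompactness gives a uniform lower bound on the injectivity radius'' and ``translation directions persist in the limit'' are not arguments; ruling out collapse of invariant domains under a convergent sequence of holonomies is the entire content of Benoist's theorem and of this paper. Moreover, the plan of taking a Blaschke limit of the given domains $\Omega_i$ and showing it is properly convex cannot work as stated: invariant domains are not unique, and the chosen $\Omega_i$ can degenerate even when the limit representation is well-behaved. The paper's Proposition (\ref{nocollapse}) does something different: using spherical centers, inertia tensors and the box estimate (\ref{boxestimate}), it produces conjugating elements $\beta_k$ --- unbounded in general --- so that $\beta_k\rho_k\beta_k^{-1}\to\sigma$ and $\beta_k(\Omega_k)$ converges to a properly convex domain; the price is that $\sigma$ is a priori related to $\rho_\infty$ only through $\chi(\sigma)=\chi(\rho_\infty)$, and the bridge back is the irreducibility discussed above. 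Finally, note an internal redundancy showing the difficulty has been displaced rather than resolved: if you could genuinely carry out your last paragraph (the limit preserves a properly convex set and acts properly and freely with quotient homotopy equivalent to $M$), then $\rho\in\Dcal(M)$ by definition and your entire semisimplicity/character-variety scaffolding is unnecessary. The real content of your proposal therefore reduces to the unproved non-degeneration claim.
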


With \cite{CT2}(2.3), this gives a short new proof of Benoist's theorem,
and of Marseglia's extension.
The hypotheses also hold if $M$ is the interior of a compact  3-manifold, and $\pi_2M=0$, and $M$ is not finitely
 covered by a bundle with fiber $F$ for which $\dim F\in\{1,2\}$ and  $\chi(F)\ge 0$.

If $N$ is a manifold with boundary then an element of $\pi_1N$
is called {\em peripheral} if it is represented by a loop that is freely homotopic into $\bdy N$.

A subgroup  $\Gamma\subset\GL(n+1,\RR)$ is {\em virtual flag group or VFG} if there is a finite index subgroup 
that is conjugate into the upper-triangular group, see \cite{CLT2}.
Given $n>0$ it follows from \cite{CLT2}(6.11) that there is an integer $k=k(n)>0$ 
such that if $C$ is an $(n-1)$-manifold that is finitely covered by a torus, and $\rho:\pi_1C\to\GL(n+1,\RR)$, then 
$\rho(\pi_1C)$ is a VFG
 if and only if for all $g\in \pi_1C$ every eigenvalue of  $\rho(g^k)$ is real.
If $N$ is a compact manifold with boundary, and $M=\interior(N)$
let $\VFG(M)\subset\Rep(M)$ be the semi-algebraic set that 
consists of those $\rho$ such that for all peripheral $\alpha\in\pi_1N$
 all the eigenvalues of $\rho(\alpha^{k})$ are real. 
 
 In this paper, a {\em generalized cusp} is a properly convex manifold homeomorphic to $C\times [0,1)$
with strictly convex boundary and $C$ is finitely covered by a torus. These are classified in \cite{BCL1}
and their moduli space is studied in \cite{BCL2}.

\begin{theorem}\label{closed4} 
Suppose $M$ is the interior of a compact connected $n$--manifold  $N$ 
and   each component $A\subset \bdy N$ is $\pi_1$-injective and $\pi_1A$
contains $\ZZ^{n-1}$ as a subgroup of finite index.
Also suppose $\pi_1M$ contains no non-trivial
normal abelian subgroup. Let $\Rep_{ce}(M)\subset\Rep(M)$ be the set of holonomies of properly convex structures on $M$.

Then $\Rep_{ce}(M)$
 is a union of connected components of $\VFG(M)$. Moreover the ends of every properly convex structure on $M$ are generalized cusps.\end{theorem}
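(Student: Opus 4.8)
The plan is to prove that $\Rep_{ce}(M)$ is simultaneously open and closed in $\VFG(M)$. Since $\VFG(M)$ is semi-algebraic it has finitely many connected components, each of which is clopen, so a clopen subset is automatically a union of components and the theorem follows. The analysis of the ends is the common engine for both halves, so I would establish it first.

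\textbf{Ends are generalized cusps.} Let $\rho\in\Rep_{ce}(M)$ develop $M$ onto a properly convex $\Omega$. Each end is covered by a component $A\subset\bdy N$ with $\pi_1A$ virtually $\ZZ^{n-1}$, hence of maximal rank $n-1$. I would invoke the structure theory for such ends: the end group is virtually abelian of the top rank realized by a generalized cusp, and the classification of \cite{BCL1} then identifies the end as a generalized cusp. This proves the ``Moreover'' clause. In particular each peripheral holonomy $\rho(\pi_1A)$ is a VFG, so by the eigenvalue criterion \cite{CLT2}(6.11) every peripheral $\alpha$ has $\rho(\alpha^{k})$ with only real eigenvalues; thus $\Rep_{ce}(M)\subseteq\VFG(M)$. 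Since a convex structure has discrete, faithful, torsion-free holonomy preserving $\Omega$, also $\Rep_{ce}(M)\subseteq\Dcal(M)$.

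\textbf{Closed.} I would show $\Rep_{ce}(M)=\Dcal(M)\cap\VFG(M)$. The inclusion $\subseteq$ is the previous step. For $\supseteq$, take $\rho\in\Dcal(M)\cap\VFG(M)$; its quotient $W$ is a properly convex manifold homotopy equivalent to $M$, and the defining condition of $\VFG(M)$ forces, via \cite{CLT2}(6.11), every peripheral holonomy to be a VFG, so the ends of $W$ are generalized cusps whose cross-sections carry the same virtually-$\ZZ^{n-1}$ groups as the components of $\bdy N$. The topological input is that a properly convex manifold homotopy equivalent to $\interior N$ with generalized-cusp ends of this type is homeomorphic to $M$; this is exactly what excludes the topology-changing limits of \cite{MR1411128}, where the limiting structure violates the peripheral real-eigenvalue condition. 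Granting this, $\rho\in\Rep_{ce}(M)$, so the two sets coincide. By Theorem~\ref{closed3}, $\Dcal(M)$ is closed in $\Hom(\pi_1M,\PGL(n+1,\RR))$, whence $\Dcal(M)\cap\VFG(M)$ is closed in $\VFG(M)$.

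\textbf{Open.} Here I would run a relative Ehresmann--Thurston deformation argument for convex projective structures with generalized-cusp ends. Given $\rho_0\in\Rep_{ce}(M)$ and $\rho\in\VFG(M)$ near $\rho_0$, the peripheral subgroups of $\rho$ remain VFG, so each end can be realized as a generalized cusp; the moduli description of \cite{BCL2} supplies a compatible deformation of each end. These deformed cusps glue to a deformed developing map whose image stays properly convex once $\rho$ is close enough to $\rho_0$, exhibiting $\rho$ as the holonomy of a properly convex structure on $M$. Hence $\Rep_{ce}(M)$ is open in $\VFG(M)$, and being clopen it is a union of connected components. The main obstacle is this openness step together with the rigidity used for $\supseteq$: one must verify that the VFG condition is precisely strong enough both to prevent the ends from degenerating under deformation and to promote the homotopy equivalence with $M$ to a homeomorphism, thereby ruling out the topology changes that can occur along general convergent sequences.
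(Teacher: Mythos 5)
Your overall skeleton (prove $\Rep_{ce}(M)$ is clopen in $\VFG(M)$, get the ends statement from structure theory, get openness from a relative Ehresmann--Thurston theorem) matches the paper's, and the openness input you flag as an obstacle is in fact available as a citation: it is \cite{CLT2}(0.2), which the paper simply quotes. The genuine gap is in your ``Closed'' step. You reduce closedness to the equality $\Rep_{ce}(M)=\Dcal(M)\cap\VFG(M)$, whose nontrivial inclusion rests on the claim that a properly convex manifold $W$ homotopy equivalent to $M$ with generalized-cusp ends must be homeomorphic to $M$. That claim is not a known theorem and, as a purely topological statement, is false: a homotopy equivalence need not respect peripheral structure, and there exist homotopy equivalent, non-homeomorphic compact aspherical manifolds with $\pi_1$-injective torus boundary (the square-knot and granny-knot exteriors have isomorphic fundamental groups but non-homeomorphic exteriors). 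The VFG condition cannot repair this, since it constrains eigenvalues of peripheral elements of $\pi_1M$ and says nothing about how the ends of the abstract quotient $W$ correspond to the components of $\bdy N$ under a mere homotopy equivalence. So ``Granting this'' is exactly where the key idea is missing; note also that the paper never asserts the equality $\Rep_{ce}(M)=\Dcal(M)\cap\VFG(M)$ --- it only proves that $\Rep_{ce}(M)$ is closed.

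The paper's mechanism for closedness is different and avoids your claim entirely. Given $\rho\in\cl(\Rep_{ce}(M))$, the limit quotient $W=\Omega(\rho)/\rho(\pi_1M)$ has generalized-cusp ends by \cite{CT2}(0.1). The deformation theorem \cite{CLT2}(0.2) is then applied \emph{at the limit point} $\rho$: any $\sigma\in\Rep_{ce}(M)$ sufficiently close to $\rho$ is also the holonomy of a properly convex structure on $W$ itself. Now the comparison is between two structures with the \emph{same} holonomy $\sigma$, one on $M$ (since $\sigma\in\Rep_{ce}(M)$) and one on $W$; by uniqueness of the minimal convex submanifold, \cite{CT2}(3.5)(9), these quotients are homeomorphic, whence $M\cong W$ and $\rho\in\Rep_{ce}(M)$. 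In other words, the paper replaces your unavailable ``homotopy equivalence $\Rightarrow$ homeomorphism'' rigidity with a rigidity statement for two structures sharing one holonomy, and the deformation theorem does double duty: it gives openness in $\VFG(M)$, and it sets up the same-holonomy comparison that gives closedness. A secondary inaccuracy: your ends step attributes too much to \cite{BCL1}, which classifies generalized cusps; the statement that ends of a properly convex manifold with virtually $\ZZ^{n-1}$ end groups \emph{are} generalized cusps is \cite{CT2}(0.1), a substantive theorem rather than a formal consequence of that classification.
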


When $M$ is closed then $\VFG(M)=\Rep(M)$ and one recovers Benoist's theorem.
When $N$ is a $3$-manifold and all the boundary components of $N$ are tori, then $\VFG(M)$ consists
of all representations of $\pi_1M$ such that the eigenvalues of all peripheral elements are real.
The proofs of these Theorems is based on an estimate (\ref{boxestimate}) for projective transformations that preserve a properly convex set.

{\em Sketch Proof}.
We want to show that, given a sequence
of convex projective structures $\Omega_k/\rho_k(\pi_1M)$ on $M$, if $\rho_k$ converges,
then $\Omega_k$ does not degenerate. Here $\Omega_k\subset\RP^n$ is properly convex and $\rho_k:\pi_1M\rightarrow\PGL(n+1,\RR)$. Consider the case that  $\Omega_k$
are getting thin in some {\em extreme} directions. This suggests some
 hyperbolic in $\rho_k(\pi_1M)$  translates
along a very short axis that cuts across $\Omega_k$ somewhere in the middle. 
Then the attracting and repelling fixed points are very close, so
the matrix must be very large. We seek to make this precise.

Fix an affine
patch and assume that  these extreme directions are given by coordinate axes. In other words the centroid
is at the origin and the inertia tensor is diagonal. By Fritz John's theorem \cite{Fritz}
there is an ellipsoid $E$ with center at the origin and $E\subset\Omega_k \subset n\cdot E$. There is a diagonal matrix $D_k$ possibly with some very large, and some very small,
entries that sends $E$ to the  sphere of radius $\sqrt{n}$, so that $$\boxset=[-1,1]^n\subset D_k(\Omega_k)\subset K\cdot \boxset,\qquad K=n^{3/2}$$
If $D_k\rho_k D_k^{-1}$ stays bounded, one has succeeded. Thus one is lead to think about  large matrices
$A\in \GL(n+1,\RR)$ that preserve $D_k(\Omega_k)$, and in particular send $\boxset$ into $K\cdot\boxset $.
The {\em box estimate} bounds all the entries $A_{ij}$ of $A$ by a fixed multiple
of $A_{n+1,n+1}$. One observes that {\em this estimate is not changed by conjugation by a diagonal matrix.}

The issue then becomes one of reducing to this special case, where all the $\Omega_k$ are in the same
nice position in one chosen affine patch. One might choose an arbitrary sequence of projective maps
$P_k$ so that $P_k(\Omega_k)$ is in nice position, but then maybe $P_k\to\infty$ and we
destroy the initial property of having both nicely positioned domains {\em and} bounded representations, since
$P_k\rho_kP_k^{-1}$ might be unbounded. This can be overcome by choosing  $P_k\in \O(n+1)$.
This leads to the notion of the {\em spherical center} of $\Omega_k$ which is an analog on the sphere of the centroid in Euclidean space.

We use an orthogonal transformation to position a properly convex domain $\Omega$
into a nice position in a chosen affine patch, and in the process select a {\em central} point in $\Omega$. By contrast, Benz{\'e}cri's theorem
\cite{Benz} uses an arbitrary
projective transformation to find a nice position for $\Omega$ such that an arbitrary point in $\Omega$
is {\em central}.\qed

Choi has a series of papers \cite{CHOIGS, Choi1,choiends, MR3715441} concerning the space of properly convex structures on orbifolds under various hypotheses,
and there is some overlap with our results.

Frequent use is made of \cite{CT2}(0.2)  concerning the irreducibility of the holonomy. The technique in \cite{CT2}
is largely algebraic, but in this paper it is more geometric, so we decided to separate them.
The first author thanks SMRI and Sydney University
for hospitality and partial support during the completion of this paper.   Research of the second author is supported in part under the Australian Research Council's ARC Future Fellowship FT170100316.
We also thank Darren Long for conversations concerning
an early version of this result.

\section{Box Estimate}

Let $\boxset=\prod_{i=1}^n[-1,1]\subset\RR^n\subset\RP^n$ and if $K>0$ then $K\cdot \boxset=\prod_{i=1}^n[-K,K]$ is called a {\em box}.

\begin{lemma}[box estimate]\label{boxestimate} If $A=(A_{ij})\in \GL(n+1,{\mathbb R})$
 and $K\ge1$ and $[A](\boxset)\subset K.\boxset$ then $$|A_{ij}|\le 2K\cdot|A_{n+1,n+1}|$$
\end{lemma}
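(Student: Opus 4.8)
The plan is to dehomogenize and read off the bounds entry-by-entry from the containment condition. Identify the affine patch with $\RR^n\subset\RP^n$ via $x\mapsto[x_1:\cdots:x_n:1]$, and write $A$ in block form
$$A=\begin{pmatrix} B & b\\ c^{T} & d\end{pmatrix},$$
where $B$ is the $n\times n$ upper-left block, $b=(A_{1,n+1},\dots,A_{n,n+1})^{T}$, $c=(A_{n+1,1},\dots,A_{n+1,n})^{T}$, and $d=A_{n+1,n+1}$. For $x\in\RR^n$ the projective action is $[A](x)=(Bx+b)/(c^{T}x+d)$, so its $i$-th coordinate is $(B_i\cdot x+b_i)/\ell(x)$, where $\ell(x)=c^{T}x+d$ and $B_i$ is the $i$-th row of $B$.

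First I would fix a normalization. Since $K\cdot\boxset$ is a bounded subset of the affine patch, the hypothesis $[A](\boxset)\subset K\cdot\boxset$ forces $[A]$ to miss the hyperplane at infinity on $\boxset$; hence $\ell(x)\neq 0$ throughout $\boxset$, and by continuity $\ell$ has constant sign on the connected set $\boxset$. Because $[A]=[-A]$ and every quantity $|A_{ij}|$ is unchanged under $A\mapsto -A$, I may assume $\ell>0$ on $\boxset$. Evaluating at $x=0$ gives $\ell(0)=d>0$, so $|A_{n+1,n+1}|=d$ is the reference quantity, and the containment condition becomes
$$|B_i\cdot x+b_i|\le K\,\ell(x)\qquad\text{for all }x\in\boxset\text{ and all }i\le n.$$

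Next I would harvest the three families of entries. The last-column entries come from $x=0$, giving $|b_i|=|A_{i,n+1}|\le Kd$. The last-row entries come from positivity of $\ell$: its minimum over the cube is $\min_{\boxset}\ell=d-\|c\|_1>0$, so $\|c\|_1\le d$ and hence $|A_{n+1,j}|\le d$. For the upper-left block I would use a symmetrization: for any $x\in\boxset$ the antipode $-x$ also lies in $\boxset$, and applying the bound at both $x$ and $-x$—using $\ell(x)+\ell(-x)=2d$ together with the triangle inequality $|2B_i\cdot x|\le|B_i\cdot x+b_i|+|{-B_i\cdot x}+b_i|$—yields $|B_i\cdot x|\le Kd$; taking $x=e_j$ gives $|A_{ij}|=|(B_i)_j|\le Kd$. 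Since $K\ge 1$ and $d>0$, all three bounds are dominated by $Kd\le 2K\,|A_{n+1,n+1}|$, which proves the lemma (indeed with the sharper constant $K$).

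I expect no serious obstacle: the only point requiring care is the sign normalization in the second step—checking that the projective hypothesis genuinely prevents $\ell$ from vanishing on $\boxset$ and that passing to $-A$ is harmless—after which the estimate is an elementary consequence of evaluating the containment inequality at the cube's center $x=0$, its vertices $x=e_j$, and their antipodes.
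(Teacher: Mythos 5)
Your proof is correct, and it gives a slightly stronger conclusion than the lemma asks for. The overall strategy is the same as the paper's: dehomogenize, split the entries into last column, last row, and upper-left $n\times n$ block, and extract each bound from the containment hypothesis at the cube's center and along coordinate directions; in particular, your observation that positivity of $\ell$ on the cube forces $\|c\|_1 < d$ is the same non-vanishing argument the paper uses to get $|A_{n+1,j}| < |A_{n+1,n+1}|$. Where you genuinely differ is in the main block. The paper works along the segments $[t e_j + e_{n+1}]$, $|t|\le 1$, bounds the denominator by $|\alpha + tA_{n+1,j}| \le 2|\alpha|$, and then chooses the sign of $t = \pm 1$ so that $A_{i,n+1}$ and $tA_{ij}$ have the same sign; the factor $2$ in the statement enters precisely through the denominator bound. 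You instead exploit the central symmetry of $\boxset$: evaluating the inequality at the antipodal pair $\pm x$ and using $\ell(x) + \ell(-x) = 2d$ cancels the constant term $b_i$ entirely, and the resulting factor $2$ in $|2B_i\cdot x|$ cancels against the $2$ in $2d$. This symmetrization yields the constant $K$ in place of $2K$, and your explicit sign normalization (replacing $A$ by $-A$ so that $\ell > 0$) makes rigorous a point the paper handles implicitly by working with absolute values of ratios. For the paper's purposes the constant is immaterial, since it is absorbed into the constant of (\ref{Klemma}), but your version is tighter and arguably cleaner.
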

\begin{proof} Set $\alpha=A_{n+1,n+1}$. Using the standard basis we have that 
$$[x_1e_1+\cdots +x_ne_n+e_{n+1}]=[x_1:x_2:\cdots:x_n:1]=(x_1,\cdots,x_n)\in\boxset\quad \Leftrightarrow\quad  \max_i |x_i|\le 1$$ 
First consider the entries $A_{i,n+1}$ in the last column of $A$. Since $[e_{n+1}]=0\in \boxset$ we have 
$$[Ae_{n+1}]=[A_{1,n+1}e_1+A_{2,n+1}e_2+\cdots A_{n,n+1}e_n+\alpha e_{n+1}]\in K\cdot\boxset$$ 
It follows that $|A_{i,n+1}/\alpha|\le K$. This establishes the bound
when $j=n+1$.

Next consider the entries $A_{n+1,j}$ in the bottom row with $j\le n$. Observe that 
$$p=[t e_j+e_{n+1}]\in\boxset\quad\Leftrightarrow\quad |t|\le 1$$
Then $[A]p=[A(te_j+e_{n+1})]\in K\cdot \boxset$. This is in ${\mathbb R}^n$ so the $e_{n+1}$ component
is not zero. Hence $t A_{n+1,j}+\alpha\ne 0$ whenever $|t|\le 1$ and it follows that $|A_{n+1,j}| <|\alpha|$.
Since $K\ge 1$ the required bound follows when $i=n+1$ and $j\le n$.

The remaining entries have $1\le i,j\le n$. If $|t|\le 1$ then   $[p_1:\cdots:p_{n+1}]=[A(te_j+e_{n+1})]\in K\cdot \boxset$
and  it follows that
$$|t|\le1\quad\Rightarrow\qquad \left|\frac{p_i}{p_{n+1}}\right|=\left|\frac{A_{i,n+1}+t A_{i,j}}{\alpha + t A_{n+1,j}}\right| \le K$$
 For all $|t|\le1$ the denominator is not zero hence $|A_{n+1,j}|<|\alpha|$. It follows that
$$|\alpha + t A_{n+1,j}|\le 2|\alpha|$$
Thus 
$$|t|\le1\quad\Rightarrow\qquad\left|A_{i,n+1}+t A_{i,j}\right| \le 2K\cdot|\alpha|$$
We may choose the sign of $t=\pm1$ so that the $A_{i,n+1}$ and $t A_{i,j}$ have the same sign. Then
$$ \left| A_{i,j}\right|\le \left|A_{i,n+1}+t A_{i,j}\right| $$
Which gives the result
$ \left| A_{i,j}\right|\le  2|\alpha|\cdot K$ in this remaining case.
\end{proof}

For example,  $A\in \O(n,1)$ preserves  the unit ball $B\subset\RR^n\subset\RP^n$ and 
$\boxset\subset B\subset\sqrt{n}\boxset$
thus this estimate applies to $A$ with $K=\sqrt{n}$.

\gap
 If $\Omega\subset\RR^n$ has finite positive Lebesgue measure the {\em centroid} of $\Omega$ is the point 
$${\cm}(\Omega)=\left.\int_{\Omega} x\ \dvol\right/\int_{\Omega} \dvol$$
If $\cm(\Omega)=0$ then
$$Q_{\Omega}(y)=\int_K\left(\|x\|^2\|y\|^2-\langle x,y\rangle^2\right) \dvol_x$$  
 is a positive definite quadratic form on  $\RR^n$ called the {\em inertia tensor}.
 
\begin{lemma}\label{Klemma} For each dimension $n$ there is $K=K(n)>1$ such if $\Omega\subset{\mathbb R}^n$ is an open bounded convex
set with inertia tensor $Q_{\Omega}=x_1^2+\cdots+x_n^2$ and centroid at the origin then $K^{-1}\boxset\subset\Omega\subset K\cdot\boxset$ and if $A\in\GL(\Omega)$ then $|A_{ij}|\le K\cdot|A_{n+1,n+1}|$.
\end{lemma}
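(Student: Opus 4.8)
The plan is to split the statement into a purely geometric containment $K^{-1}\boxset\subset\Omega\subset K\boxset$ and the matrix bound for $A$ in the projective stabiliser $\GL(\Omega)$ of $\Omega$, and to deduce the latter from the former together with the box estimate (\ref{boxestimate}). Granting the containment with some constant $K_0=K_0(n)$, I would argue as follows. Since $[A]$ preserves $\Omega$, applying it to $K_0^{-1}\boxset\subset\Omega$ gives $[A](K_0^{-1}\boxset)\subset\Omega\subset K_0\boxset$. Writing $K_0^{-1}\boxset=[S]\boxset$ and $K_0\boxset=[T]\boxset$ for the diagonal matrices $S=\Diag(K_0^{-1},\dots,K_0^{-1},1)$ and $T=\Diag(K_0,\dots,K_0,1)$, this reads $[T^{-1}AS](\boxset)\subset\boxset$, so (\ref{boxestimate}) with $K=1$ yields $|(T^{-1}AS)_{ij}|\le 2|(T^{-1}AS)_{n+1,n+1}|=2|A_{n+1,n+1}|$. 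As $S,T$ are diagonal, $(T^{-1}AS)_{ij}=(T^{-1})_{ii}A_{ij}S_{jj}$ and each factor $(T^{-1})_{ii}S_{jj}$ is at least $K_0^{-2}$, so $|A_{ij}|\le 2K_0^2\,|A_{n+1,n+1}|$. Choosing $K\ge\max(K_0,2K_0^2)$ proves both conclusions at once; this is the precise sense in which, as remarked before (\ref{boxestimate}), the estimate is unaffected by conjugation by a diagonal matrix.

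The substance is therefore the containment, and here I would first unwind the normalization. Expanding $Q_\Omega(y)=\int_\Omega(\|x\|^2\|y\|^2-\langle x,y\rangle^2)\dvol$ shows that its matrix is $(\tr M)I-M$, where $M=\int_\Omega xx^{\mathsf T}\dvol$ is the unnormalised second-moment matrix; so the hypothesis $Q_\Omega=x_1^2+\cdots+x_n^2$ is equivalent to $M=\tfrac{1}{n-1}I$, i.e. $\Omega$ is in isotropic position at a fixed scale. In particular $\int_\Omega\langle x,u\rangle^2\dvol=\tfrac{1}{n-1}$ for every unit vector $u$, and $\int_\Omega\|x\|^2\dvol=\tfrac{n}{n-1}$. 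The strategy is then to combine this with Fritz John's theorem \cite{Fritz}: there is an ellipsoid $E$, with centre $c_0$ and semiaxes $\lambda_1\ge\cdots\ge\lambda_n$, such that $E\subset\Omega\subset c_0+n(E-c_0)$. Bounding the volume above is immediate, since among all sets of given volume the ball centred at the origin minimises $\int\|x\|^2\dvol$, so the fixed value $\tfrac{n}{n-1}$ of this integral forces $\vol\Omega\le C(n)$ by rearrangement. Because $E\subset\Omega\subset c_0+n(E-c_0)$, the second-moment matrix of $\Omega$ is comparable, up to dimensional factors, to that of $E$, which is $\tfrac{1}{n+2}\Diag(\lambda_i^2)$; as the former is a multiple of $I$, all the $\lambda_i$ are comparable and their common scale is pinned down, giving $c(n)\le\lambda_n\le\lambda_1\le C(n)$ together with $|c_0|\le C(n)$. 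This sandwiches $\Omega$ between two balls about $c_0$, and the centroid-balance fact $-\Omega\subseteq n\Omega$ (valid because the centroid is at $0$) lets me recentre these at the origin, yielding $c'(n)B\subset\Omega\subset C'(n)B$ for the unit ball $B$. Finally $B\subset\boxset\subset\sqrt n\,B$ converts this into the desired box containment.

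The main obstacle is the non-degeneracy, namely the lower bounds on the inradius and the volume: a priori $\Omega$ could be a very thin slab of small volume yet still reach far in one direction, and such a body is excluded neither by the volume upper bound nor by the centroid condition alone. The feature that rescues the argument is the full isotropy $\int_\Omega\langle x,u\rangle^2\dvol=\tfrac{1}{n-1}$ in \emph{every} direction, which is exactly what keeps the second-moment matrix from being nearly singular and hence forces all of John's semiaxes to be bounded away from $0$; quantitatively this is the point at which I would invoke the standard two-sided comparison between a convex body in isotropic position and its inertia ellipsoid. The remaining work is bookkeeping: tracking the various dimensional constants and checking that the recentring via $-\Omega\subseteq n\Omega$ produces a ball genuinely centred at the origin rather than merely near it.
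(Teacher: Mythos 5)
Your deduction of the matrix bound from the containment is correct and is essentially the paper's own argument: the paper conjugates by the single diagonal matrix $D=\Diag(K,\dots,K,1)$, so that $\boxset\subset D(\Omega)\subset K^2\boxset$, applies (\ref{boxestimate}) to $DAD^{-1}$, and converts back, getting the constant $2K^4$; your two-sided version with $S$ and $T$ gives $2K_0^2$. Both rest on the same observation, that diagonal conjugation does not change the $(n+1,n+1)$ entry and distorts the other entries by factors controlled by $K_0$.

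Where the two treatments genuinely diverge is the containment $K^{-1}\boxset\subset\Omega\subset K\boxset$. The paper's entire proof of this is the sentence ``the first conclusion is given by the theorem of Fritz John with $K=\sqrt n$,'' which elides exactly the issue you isolate: John's theorem concerns the John position (maximal inscribed ellipsoid a ball), while the hypothesis here is the isotropic position ($\int_\Omega xx^{\mathsf T}\,\dvol=\tfrac1{n-1}I$, as you correctly compute), and passing between the two is precisely the non-degeneracy problem. Your outline is sound and completable, and more honest than the paper on this point, but it has one genuine soft spot. The moment comparison must be taken about the origin, so the relevant matrix for $E$ (in its principal coordinates $v_1,\dots,v_n$) is $\vol(E)\bigl(\tfrac1{n+2}\Diag(\lambda_i^2)+c_0c_0^{\mathsf T}\bigr)$, not $\tfrac1{n+2}\Diag(\lambda_i^2)$; and from ``all diagonal entries of this matrix lie between two constants'' one cannot conclude that the $\lambda_i$ are comparable, because a large $\langle c_0,v_i\rangle^2$ can compensate for a tiny $\lambda_i^2$. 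Indeed $\lambda_1=t$, $\lambda_2=\dots=\lambda_n=t^{-3/(n-1)}$, $\langle c_0,v_1\rangle=0$, $\langle c_0,v_i\rangle\approx t$ for $i\ge 2$ satisfies all the two-sided moment inequalities as $t\to\infty$ while violating your conclusion. The missing ingredient is control of $c_0$, and it is available: the centroid $0$ lies in $\Omega\subset c_0+n(E-c_0)$, hence $|\langle c_0,v_i\rangle|\le n\lambda_i$ for every $i$, so that $\tfrac{\lambda_i^2}{n+2}\le\tfrac{\lambda_i^2}{n+2}+\langle c_0,v_i\rangle^2\le 2n^2\lambda_i^2$; then $\vol(E)\lambda_i^2$ is bounded above and below by dimensional constants for every $i$, which together with $\vol(E)=\omega_n\prod_j\lambda_j$ forces all $\lambda_i$, $\vol(E)$ and $|c_0|$ to be pinned down as you claim, and your recentring step via $-\Omega\subseteq n\Omega$ finishes the proof. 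Alternatively, taking the two-sided comparison of a centred convex body with its inertia ellipsoid as a black box, as you suggest, is legitimate---and puts your proof on exactly the same footing as the paper's bare citation of John's theorem.
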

\begin{proof} The first conclusion is given by the theorem of Fritz John, \cite{Fritz} with $K=\sqrt{n}$, see also  \cite{ball}.
Let $D=\Diag(K,\cdots,K,1)\in\GL(n+1,\RR)$ then $\boxset\subset D(\Omega)\subset K^2\boxset$.
Set $A'=D\cdot A\cdot D^{-1}$ then $A'\in\GL(D(\Omega))$, thus $|A'_{ij}|\le 2K^2\cdot|A'_{n+1,n+1}|$ by  (\ref{boxestimate}).
Now $|A'_{n+1,n+1}|=|A_{n+1,n+1}|$ and $|A_{i,j}|\le K^2 |A'_{i,j}|$ thus
$|A_{ij}|\le 2K^4\cdot|A_{n+1,n+1}|$. The result now holds using the constant $2K^4$.
  \end{proof}

\section{Spherical Centers}

An  subset $\Omega\subset\RP^n$  is {\em convex}  if every pair of points in $\Omega$
is contained in a segment of a projective line in $\Omega$, and {\em properly convex}
if in addition $\Omega$ is open and $\cl\Omega$ contains no projective line.
The {\em frontier} of $\Omega$ is $\Fr\Omega=\cl\Omega\setminus\interior\Omega$.
Morever $\Omega$ is {\em strictly convex} if it is properly convex
and  $\Fr\Omega$ contains no line segment, and $\Omega$ is called $C^1$ if for each $p\in\Fr\Omega$
there is a unique projective hyperplane $H$ such that $H\cap\cl\Omega=p$. Finally, $\Omega$ 
is {\em round} if it is both strictly convex, and $C^1$.
It is convenient to work in  the double cover, $\SS^n$, of $\RP^n$ and apply these terms to a lift of $\Omega$
to $\SS^n$.

If $\Omega\subset {\mathbb R}^n$ is bounded and convex then the centroid $\cm(\Omega)$ is a distinguished point in $\Omega$.
We wish to do a something similar for subsets of $\SS^n$.
Let $\langle\cdot,\cdot\rangle$ be the standard inner product on $\RR^{n+1}$
and let $\SS^n$ be the unit sphere. The open hemisphere that is
the $\pi/2$ neighborhood of $y\in\SS^n$ is $U_y=\{x\in \SS^n: \langle x,y\rangle>0\}$.
Radial projection $\pi_y:U_y\longrightarrow {\operatorname{T_y}}\SS^n$ 
from the origin onto the tangent space
to $\SS^n$ at $y$ is given by $$\pi_y(x)=\frac{x-\langle x,y\rangle y}{\langle x,y\rangle}$$
We may identify $U_y$ with an affine patch in $\RP^n$. Then this choice of inner product
on $\RR^{n+1}$ gives an inner product on the affine patch by using the radial
projection to identify the affine patch with the subspace $y^{\perp}\subset\RR^{n+1}$.
The {\em dual} of $\Omega\subset \SS^n$  is $$\Omega^*=\{y\in\SS^n:\ \forall x\in\cl \Omega\ \ \langle x,y\rangle >0\}$$
This set is always convex, but it is empty unless $\cl \Omega$ is  contained in some open hemisphere. 

\begin{definition} A point $y\in \Omega^*$ is called
a {\em center of $\Omega$}  if 
 $\cm(\pi_y(\Omega))=\pi_y(y)$.
 \end{definition}

If $y$ is a center of a properly convex set $\Omega$ then $y\in\Omega\cap\Omega^*$.
If $A\in\O(n+1)$ then  $(A\Omega)^*=A(\Omega^*)$, and  $Ay$ is a center of $A\Omega$
if  $y$ is a center of $\Omega$. 
 
\begin{theorem} If $\Omega\subset\SS^n$ is properly-convex and has nonempty interior 
then it has a center $p\in\Omega$.
\end{theorem}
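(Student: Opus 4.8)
The plan is to exhibit a center as the minimizer of the volume of the radial projection, regarded as a function on $\Omega^*$. For $y\in\Omega^*$ we have $\cl\Omega\subset U_y$, so $\pi_y$ is a diffeomorphism from a neighbourhood of $\cl\Omega$ onto an open subset of $T_y\SS^n\cong y^{\perp}$, and $\pi_y(\Omega)$ is a bounded open convex set. Set
$$V(y)=\vol\big(\pi_y(\Omega)\big)=\int_{\pi_y(\Omega)}\dvol_z.$$
The first computation I would carry out is the Jacobian of radial projection: if $x$ makes angle $\theta$ with $y$, then the radial and the $(n-1)$ spherical directions contribute factors $\sec^2\theta$ and $\sec\theta$, so $\dvol_z=\langle x,y\rangle^{-(n+1)}\dvol_x$. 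Pushing the integral back to the fixed domain $\Omega$ gives the closed form $V(y)=\int_{\Omega}\langle x,y\rangle^{-(n+1)}\dvol_x$, which is smooth on $\Omega^*$ because $\langle x,y\rangle$ is bounded below by a positive constant on the compact set $\cl\Omega$.

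Next I would identify the critical points of $V$ with the centers. Viewing $V$ as homogeneous of degree $-(n+1)$ on the dual cone and projecting $\nabla V(y)$ onto $y^{\perp}=T_y\SS^n$, the same Jacobian identity yields
$$\int_{\pi_y(\Omega)}z\,\dvol_z=\int_{\Omega}\frac{x-\langle x,y\rangle y}{\langle x,y\rangle^{\,n+2}}\,\dvol_x=-\frac{1}{n+1}\,\operatorname{grad}V(y),$$
where $\operatorname{grad}V$ denotes the gradient of $V|_{\SS^n}$ (the component of $\nabla V$ tangent to $\SS^n$). Since $\cm(\pi_y(\Omega))=V(y)^{-1}\int_{\pi_y(\Omega)}z\,\dvol_z$ and $\pi_y(y)=0$, this shows that $y\in\Omega^*$ is a center of $\Omega$ if and only if $\operatorname{grad}V(y)=0$. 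Thus the centers are exactly the critical points of $V$ on $\Omega^*$, and it suffices to produce one critical point.

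I would produce it as a global minimum, using that $V$ is proper, i.e. $V(y)\to\infty$ as $y\to\bdy\Omega^*$. Fix $y_0\in\bdy\Omega^*$, so there is $x_0\in\cl\Omega$ with $\langle x_0,y_0\rangle=0$. Choose an interior ball $B=B(x_\ast,\delta)\subset\Omega$ with $\langle x_\ast,y_0\rangle>0$; for $y$ near $y_0$ the image $\pi_y(B)$ contains a ball of some fixed radius $r_0>0$. Choosing $x_1\in\Omega$ with $\langle x_1,y_0\rangle$ arbitrarily small makes $\|\pi_y(x_1)\|=\langle x_1,y\rangle^{-1}\sqrt{1-\langle x_1,y\rangle^{2}}$ arbitrarily large for $y$ close to $y_0$. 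As $\pi_y(\Omega)$ is convex it contains the convex hull of $\pi_y(B)\cup\{\pi_y(x_1)\}$, a cone whose volume is at least a constant times $r_0^{\,n-1}\,\|\pi_y(x_1)-\pi_y(x_\ast)\|\to\infty$; hence $V(y)\to\infty$. (Alternatively one may invoke Fatou and check that $\int_{\Omega}\langle x,y_0\rangle^{-(n+1)}\dvol_x=\infty$ by a local non-integrability estimate near the touching point.) Since $\Omega^*$ is a nonempty, relatively compact, open convex subset of $\SS^n$, properness makes the sublevel sets of $V$ compact in $\Omega^*$, so $V$ attains a global minimum at an interior point $p\in\Omega^*$, where $\operatorname{grad}V(p)=0$. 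By the previous paragraph $p$ is a center; and because $\cm(\pi_p(\Omega))=\pi_p(p)=0$ lies in the open set $\pi_p(\Omega)$, we get $p=\pi_p^{-1}(0)\in\Omega$, as already observed before the statement.

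The main obstacle is the properness step, which is where the hypothesis of nonempty interior really enters: one must rule out that $\pi_y(\Omega)$ becomes long and thin with bounded volume as $y$ degenerates. The convex-hull (cone) estimate above is the clean way to handle this, since it only uses that $\Omega$ contains a fixed interior ball together with points of arbitrarily small pairing against $y_0$. Everything else — the Jacobian factor, smoothness, and the gradient identity — is routine once this closed form for $V$ is in hand.
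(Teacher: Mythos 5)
Your proof is correct, and it takes a genuinely different route from the paper's. The paper proves existence as a fixed point of the map $m(y)=\pi_y^{-1}\cm(\pi_y\Omega)$: assuming no fixed point, it builds a map $r:\overline\Omega^*\to\bdy\overline\Omega$ restricting to a homeomorphism of the boundaries, a Brouwer-type contradiction. That forces a two-step structure: first the round case, where the boundary extension $M(y)=y^{\perp}\cap\overline\Omega$ is a Gauss-type homeomorphism and a continuity claim for the extension must be proved by volume estimates, and then an approximation of a general properly convex $\Omega$ by round domains with a limit of their centers. Your argument is instead variational, in the style of the Santal\'o point of a convex body: the identity $\int_{\pi_y(\Omega)}z\,\dvol_z=-\tfrac{1}{n+1}\operatorname{grad}V(y)$ (which I checked; the Jacobian $\dvol_z=\langle x,y\rangle^{-(n+1)}\dvol_x$ and the tangential-gradient computation are both right) identifies centers exactly with critical points of $V(y)=\vol(\pi_y\Omega)$, and properness of $V$ on $\Omega^*$ yields a minimizer, hence a center, which lies in $\Omega$ since the centroid of the open convex set $\pi_p(\Omega)$ lies in it. Interestingly, the hard analytic ingredient is shared: your cone estimate showing $V(y)\to\infty$ as $y\to\bdy\Omega^*$ is essentially the paper's lower bound $\vol_n(\pi_wB)\to\infty$ in the proof of its continuity claim; you use it for compactness of sublevel sets, the paper uses it to control the boundary extension of $m$. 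What your approach buys: the general case is handled in one stroke, with no round approximation, no Gauss-map continuity claim, and no topological input beyond compactness; it also exhibits the center as the minimizer of a natural functional. What the paper's approach buys: it avoids the Jacobian and differentiation-under-the-integral calculus, staying purely geometric. Two points you should make explicit to be complete: that $\Omega^*\ne\emptyset$ (a lift of a properly convex set has closure contained in an open hemisphere; the paper uses this tacitly as well), and the routine domination argument justifying smoothness of $V$ and the gradient identity, using that $\langle x,y\rangle$ is bounded below on $\cl\Omega\times K$ for each compact $K\subset\Omega^*$.
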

\begin{proof} 
 There is a continuous map
$$m:\Omega^*\longrightarrow\Omega$$
defined by $m(y)=\pi_y^{-1}\cm(\pi_y\Omega)$. The theorem asserts
this map has a fixed point. First we prove this under the additional assumption that $\Omega$ is round.
With this assumption, if $y\in\bdy\overline\Omega^*$ then, since $\Omega$ is round,
$M(y)=y^{\perp}\cap\overline\Omega$ is a single point in $\bdy\overline\Omega$. This defines a homeomorphism
 $$M:\bdy\overline\Omega^*\longrightarrow\bdy\overline\Omega$$
Observe that the line segment $[y,M(y)]$ has length $\pi/2$ and is orthogonal to $\bdy\overline\Omega$
at $p$. Thus $M^{-1}$ is a spherical version of  the Gauss normal map.

{\bf Claim}  The extension of $m$ given by $m|\bdy\overline\Omega^*=M$  is continuous.
 
\gap
 Assuming the claim, if $m$ has no fixed point, define $r:\overline\Omega^*\longrightarrow\partial\overline\Omega$
as follows. Given $y\in\overline\Omega^*$ extend the line $[y,m(y)]$ to $[y,r(y)]$ where $r(y)\in\partial\overline\Omega$. 
Then $r$ is continuous 
and $r|\bdy\overline\Omega^*=M$  is a homeomorphism. 
This is impossible, which
proves the theorem modulo the claim.

Proof of claim:   The spherical metric $\theta$ on $\SS^n$ is given by
$\cos\theta(x,z)=\langle x,z\rangle$. 
The Euclidean norm $\|\cdot\|$  on $\pi_wU_w$ is given by $\|\pi_wx\|=\tan\theta(w,x)$.
 If $y\in\bdy\overline\Omega^*$ then
 $M(y)\in  y^{\perp}$ and it follows that  $\theta(y,p)=\pi/2$ where $p=M(y)=m(y)$.
 If $w\in\Omega^*$ is close to $y$ we must show that $m(w)$ is close to $p$.

Here is an informal argument:  $\pi_y\Omega$ is unbounded in the direction given by $p$, but bounded in all other directions.
For $w$ close to $y$ then $\pi_w\Omega$ only extends a large distance in the directions close to those given
by $p$.
Thus $\hat\mu(\pi_w\Omega)$ is far out in the direction of $p$.

Refer to Figure (\ref{sphproj}).
 Let $\ell\subset\SS^n$ be the geodesic segment of length $\pi/2$ with endpoints $y$ and $p$,
 and let $q$ be a point in the interior of $\ell$. Let 
  $H \subset \SS^n$ be the great sphere that is orthogonal to $\ell$ at $q$.
  Then $H$
and separates $\Omega$ into two components with closures $A$ and $B$, labelled so  that  $p$ is in $B$.
We will show that as $w$ approaches $y$  the volume in $\RR^n$ of $\pi_wA$ remains bounded and the volume
  of $\pi_wB$ goes to infinity. Let $a_w=\hat\mu(\pi_w A)$ and $b_w=\hat\mu(\pi_w B)$. Since $\pi_w A$ and $\pi_w B$ are
  convex $a_w\in \pi_wA$ and $b_w\in \pi_wB$. Using $\Omega=A\cup B$ gives
  
 $$\cm(\pi_w\Omega)=\frac{\vol_n(\pi_wA)\cdot a_w+\vol_n(\pi_wB)\cdot b_w}{\vol_n(\pi_wA)+\vol_n(\pi_wB)}=b_w+\left(\frac{\vol_n(\pi_wA)}{\vol_n(\pi_wA)+\vol_n(\pi_wB)}\right)(a_w-b_w)$$
    It follows that  $\cm(\pi_w\Omega)$ is close to $b_w$, thus $m(w)\in B$ for $w$ close enough to $y$. This
  proves the claim. It remains to estimate the volumes of $\pi_wA$ and $\pi_wB$.

Now $p$ is the unique point in $\overline\Omega$ that maximizes $\theta(y,p)$ and $\theta(y,p)=\pi/2$.
Since $B$ is a neighborhood of $p$ in $\overline\Omega$
there is $\epsilon>0$ such that $\theta(y,x)<\pi/2-2\epsilon$ for all $x\in A$.
If $w$ is close enough to $y$ then $\theta(w,y)<\epsilon$, so $\theta(w,x)\le \theta(w,y)+\theta(y,x)<\pi/2-\epsilon$ for all $x\in A$. 
Set $\alpha=\tan(\pi/2-\epsilon)$ then $\|x\|\le\alpha$ for all $x\in\pi_w(A)$,
thus $$\vol_n(\pi_w A)\le\omega_n\alpha^n$$ where $\omega_n$ is the volume of the unit ball in $\RR^n$. 
This  bound holds for all $w$ close enough to $y$. 

 To obtain a lower bound for the volume of $B$ we use that $B$ contains the cone on $J=H\cap\overline\Omega$
 from $p$, so
$$\vol_n(\pi_wB)\ge\vol_n(\operatorname{cone}(\pi_wp,\pi_wJ))= n^{-1}\cdot\vol_{n-1}(\pi_wJ)\cdot d(\pi_wp,\pi_wH)$$
By compactness, there is $\epsilon>0$ such that $\vol_{n-1}(\pi_w J)>\epsilon$ for all $w$ close to $y$.
Now $\|p\|\to\infty$ as $w\to y$. The line from $y$ to $p$ is orthogonal in $\SS^n$ to $H$. Thus for
$w$ close to $y$ the line in $\RR^n$ from $\pi_ww=0$ to $\pi_w p$ is almost orthogonal to $\pi_w H$.
It follows that $d(\pi_wp,\pi_wH)\ge \|p\|-\alpha-1$ for $w$ close enough to $y$. Thus $d(\pi_wp,\pi_wH)\to\infty$ as $w\to y$, hence
$\vol(\pi_w B)\to\infty$.
This completes the proof when $\Omega$ is round.

\begin{figure}[ht]
 \begin{center}
	 \includegraphics[scale=0.3]{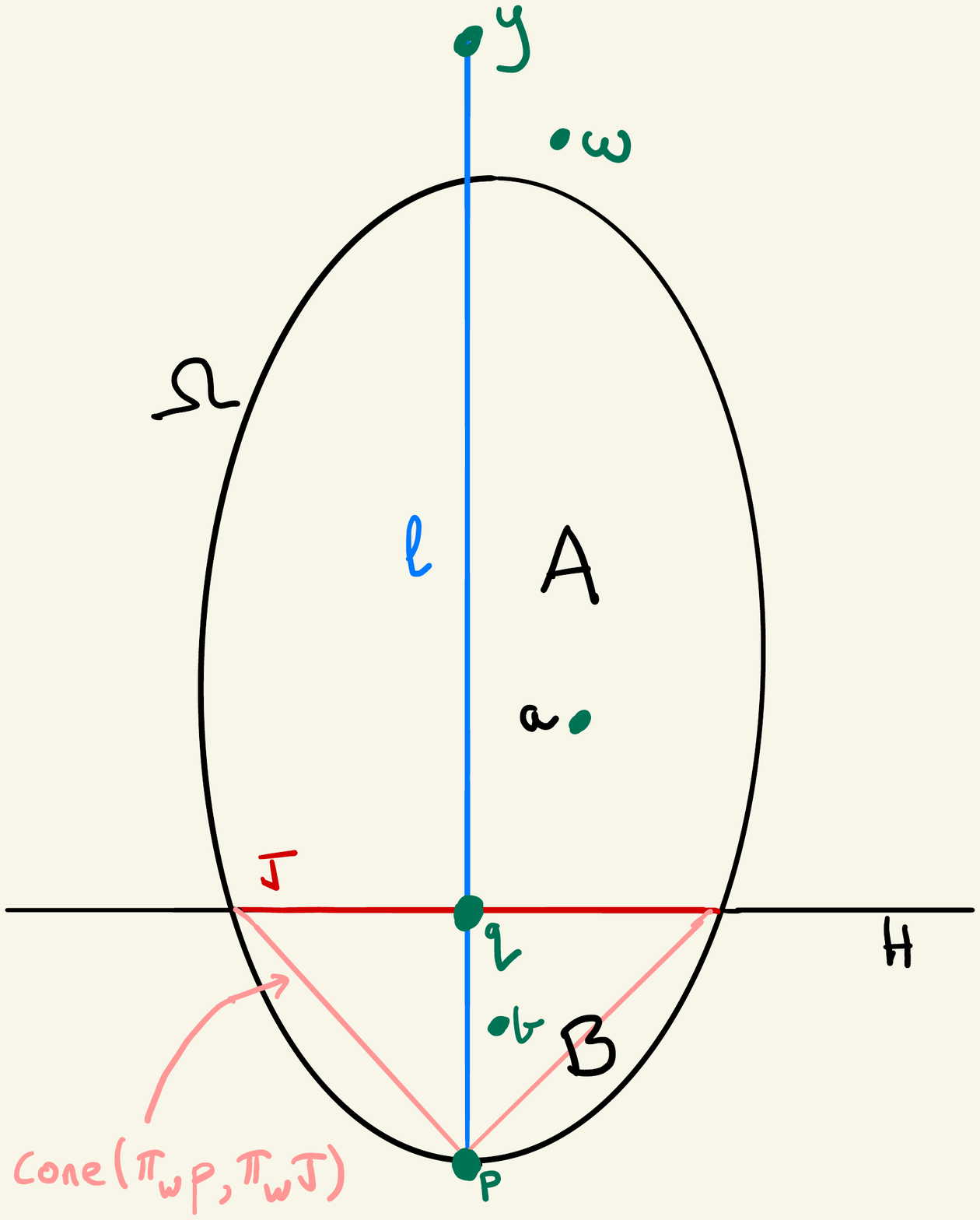}
	 \end{center}
 \caption{Image in $T_w\SS^n$}\label{sphproj}
\end{figure}

We now deduce the general result
from the fact that a general properly convex $\Omega$ is the limit of round convex sets.  
Given $\epsilon>0$, using the techniques in section 8 of \cite{CLT2},  there is a round $\Omega(\epsilon)$ such that
$$\Omega(\epsilon)\subset\Omega\subset N_{\epsilon}(\Omega(\epsilon))$$
 Let $c(\epsilon)\in\Omega(\epsilon)$\
be a center. There is a sequence $\epsilon_n\to 0$ such that $c_n\to c$. Clearly $c\in\overline\Omega$.
It is only necessary to check that the center of a round set
can not be very close to the boundary, then it follows that $c\in\Omega$. It is then easy to see that $c$ is a center for $\Omega$.
\end{proof}

\section{Limits of properly-convex manifolds}

Suppose $\rho_k$ is the holonomy of a properly-convex real projective structure on a manifold $M$ without boundary
of dimension $n$,
so that $M\cong \Omega_k/\Gamma_k$ with $\Omega_k$ properly-convex and $\Gamma_k=\rho_k(\pi_1M)$.
Suppose the holonomies converge
 $\lim\rho_k=\rho_{\infty}$. If $\Omega_{\infty}=\lim\Omega_k$ exists, then it is convex, but
it might have smaller dimension, or it might not be properly convex.  We describe this by saying {\em the domain
has degenerated}. If this happens then $\rho_{\infty}$ is reducible. The statements below allow $\bdy\Omega\ne\emptyset$
so that they can be applied to manifolds with $\bdy M\ne \emptyset$.

Let $\Pcal$ be the set of  properly convex open subsets of $\RP^n$. 
Choose
a Riemannian metric $d$ on $\RP^n$ then
 the {\em Hausdorff topology} on $\Pcal$ is given by the metric
$d(\Omega,\Omega')=\sup\{d(x,x'):x\in\Omega,\ \ x'\in\Omega'\}$.

 The {\em character variety} is
 $$\Xcal(G)= \Hom(G,\PGL(n+1,\RR))//\PGL(n+1,\RR)$$ Let
  $\chi: \Hom(G,\PGL(n+1,\RR))\to  \Xcal(\pi_1M)$ be projection. The following result makes no assumption 
 about the discreteness of representations.

\begin{proposition}[avoiding degeneration]\label{nocollapse} Suppose $G$ is a finitely generated group 
and $\rho_k:G\to\PGL(n+1,{\mathbb R})$ and $\rho_k(G)$ preserves a properly convex set  $\Omega_k\subset\RP^n$ with $\dim\Omega_k=n$.
Suppose $\lim\rho_k=\rho_{\infty}$. 

Then, after taking a subsequence, there are  $\beta_k\in\PGL(n+1,\RR)$ and $\sigma\in\Hom(G,\PGL(n+1,\RR))$
such that $\lim\beta_k\rho_k \beta_k^{-1}=\sigma$, and $\beta_k(\Omega_k)$ 
converges in the Hausdorff boundary topology to a properly-convex set $\Omega$ with $\dim\Omega=n$. In addition,
  $\sigma(G)$ preserves
 $\Omega$, and $\chi(\sigma)=\chi(\rho_{\infty})$. \end{proposition}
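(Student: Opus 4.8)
The plan is to use the spherical center and John's ellipsoid to put each $\Omega_k$ into a uniformly bounded position by a single $\beta_k$, apply the box estimate to the conjugated group, and then rule out degeneration of the limit representation. First I would fix, for each $k$, a spherical center $p_k\in\Omega_k$ (it exists by the center theorem) and choose $A_k\in\O(n+1)$ with $A_kp_k=e_{n+1}$; by the remarks after the definition of center, $e_{n+1}$ is a center of $A_k\Omega_k$, so $\pi_{e_{n+1}}(A_k\Omega_k)$ has centroid at the origin of the affine patch $U_{e_{n+1}}$. I would then take $R_k\in\O(n)\subset\O(n+1)$ fixing $e_{n+1}$ to diagonalize the inertia tensor and a diagonal $D_k=\Diag(d_1,\dots,d_n,1)$ to normalize it, so that $\beta_k:=D_kR_kA_k$ brings $\beta_k\Omega_k$ into the normalized position of Lemma \ref{Klemma}. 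That lemma supplies the uniform containment
$$K^{-1}\boxset\subset\beta_k\Omega_k\subset K\cdot\boxset,\qquad K=K(n)>1,$$
with centroid still at the origin, and simultaneously bounds the entries of every automorphism of $\beta_k\Omega_k$.

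Next I would extract the limiting data along a subsequence. The bodies $\cl(\beta_k\Omega_k)$ are trapped between two fixed boxes, so by the Blaschke selection theorem a subsequence converges in the Hausdorff topology to a convex body $\cl\Omega$ with $K^{-1}\boxset\subset\Omega\subset K\cdot\boxset$; thus $\Omega$ is bounded in the patch, hence properly convex, and $\dim\Omega=n$. Since these bodies have volumes bounded above and below, the centroid varies continuously in the limit, giving $\cm(\Omega)=0$. For each of the finitely many generators $g$ the matrix $M_k(g):=\beta_k\rho_k(g)\beta_k^{-1}$ preserves $\beta_k\Omega_k$, so Lemma \ref{Klemma} gives $|M_k(g)_{ij}|\le K\,|M_k(g)_{n+1,n+1}|$; as $M_k(g)$ is invertible its $(n+1,n+1)$ entry is nonzero, and normalizing it to $1$ confines $M_k(g)$ to a compact set of matrices. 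A further subsequence yields $M_k(g)\to M_\infty(g)$ for every $g\in G$, and since the defining relations of $G$ are closed conditions, $g\mapsto[M_\infty(g)]$ will define a homomorphism $\sigma$ as soon as each $M_\infty(g)$ is invertible.

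I expect this invertibility to be the main obstacle, since a priori the representation could degenerate (the limit matrix become singular) even though the domain does not. To overcome it I would normalize each $M_k(g)$ to have $\det=\pm1$; the box estimate is scale free, so it still holds, and combined with $\det=\pm1$ it bounds $|M_k(g)_{n+1,n+1}|$ away from $0$. The crucial extra input is that conjugation preserves the spectrum: the eigenvalues of $M_k(g)$ equal those of $\rho_k(g)$, and choosing lifts with $\rho_k(g)\to\rho_\infty(g)$ in $\GL(n+1,\RR)$ keeps all eigenvalue moduli in a fixed annulus $[\Lambda^{-1},\Lambda]$ bounded away from $0$ and $\infty$. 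Because $M_k(g)$ preserves the uniformly round cone $C_k=\operatorname{cone}(\beta_k\Omega_k)$, its extreme (Perron--Frobenius) eigendirections lie in $C_k$, hence are uniformly bounded and transverse; together with the annulus bound this confines $M_k(g)$ to a compact subset of $\PGL(n+1,\RR)$. Applying the same estimate to $g^{-1}$, so that $M_k(g)M_k(g^{-1})=\lambda_kI$ with $\lambda_k$ bounded, rules out the remaining (nilpotent/shear) degeneration and forces $\lambda_\infty\ne0$; thus $M_\infty(g)$ is invertible.

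Finally, passing the identity $M_k(g)(\beta_k\Omega_k)=\beta_k\Omega_k$ to the Hausdorff limit shows $\sigma(g)\,\cl\Omega=\cl\Omega$, so $\sigma(G)$ preserves $\Omega$. For the last assertion, $\chi$ is continuous and conjugation invariant, so $\chi(\beta_k\rho_k\beta_k^{-1})=\chi(\rho_k)\to\chi(\rho_\infty)$ while the left-hand side converges to $\chi(\sigma)$; hence $\chi(\sigma)=\chi(\rho_\infty)$.
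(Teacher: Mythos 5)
Your setup (spherical center, orthogonal maps $R_kA_k$, the diagonal $D_k=\Diag(d_1,\dots,d_n,1)$, Blaschke selection for the domains, and the entry-normalized compactness of the $M_k(g)$) matches the paper's proof, and your final paragraph (passing invariance to the limit and $\chi(\sigma)=\chi(\rho_\infty)$) is fine. The gap is exactly in the paragraph you flag as the crux. The claim that the eigenvalue annulus, together with Perron--Frobenius eigendirections lying in the cone, ``confines $M_k(g)$ to a compact subset of $\PGL(n+1,\RR)$'' is false. Counterexample: the parabolic automorphisms of the round unit ball (a domain wedged between two fixed boxes) are unipotent, so every eigenvalue equals $1$ and the determinant is $1$, yet they form an unbounded subset of $\PGL(n+1,\RR)$; similarly, hyperbolic isometries of the ball with fixed translation length but axes escaping to infinity have fixed spectrum and unbounded matrices. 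In these examples the attracting and repelling directions are not uniformly transverse --- they collapse together --- and nothing in your argument prevents this; indeed such collapse is precisely the degeneration the proposition must exclude (it is the scenario described in the paper's own sketch), so asserting ``uniformly bounded and transverse'' eigendirections assumes what is to be proved. Consequently your det-normalized matrices may a priori have unbounded entries, the limits $M_\infty(g)$ and $M_\infty(g^{-1})$ need not exist, and the identity $M_k(g)M_k(g^{-1})=\pm I$ cannot be passed to the limit.

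What closes the gap is an observation your normalization already contains but you never exploit: since $D_k=\Diag(d_1,\dots,d_n,1)$ fixes $e_{n+1}$, conjugation by $D_k$ does not change the $(n+1,n+1)$ entry. Hence
$$\bigl(M_k(g)\bigr)_{n+1,n+1}=\bigl(R_kA_k\,\rho_k(g)\,A_k^{-1}R_k^{-1}\bigr)_{n+1,n+1},$$
and the right-hand side is bounded (convergent after a subsequence) because $R_kA_k\in\O(n+1)$ lies in a compact group and $\rho_k(g)\to\rho_\infty(g)$. The box estimate then bounds \emph{every} entry of $M_k(g)$ uniformly in $k$, so a subsequence converges; and since the lifts $\rho_k(g)$, hence the conjugates $M_k(g)$, may be taken in $\SL^{\pm}(n+1,\RR)$, continuity of the determinant gives $\det M_\infty(g)=\pm 1$, so each limit is invertible with no spectral argument needed. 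This is exactly the paper's key step (``the estimate is not affected by how large $D_k$ is''); once it is in place, the rest of your argument goes through.
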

\begin{proof} Choose
an affine patch ${\mathbb R}^n\subset{\mathbb R}P^n$ then there is $\alpha_k\in \PO(n+1)$
such that $\Omega'_k=\alpha_k(\Omega_k)\subset{\mathbb R}^n$ has center $0\in {\mathbb R}^n$. We may choose $\alpha_k$ so that the
inertia tensor $Q_k=Q(\Omega'_k)$ is diagonal in the standard coordinates on ${\mathbb R}^n$. 
Since $\PO(n+1)$ is compact, after subsequencing
we may assume that $\alpha_k$ converges and also that $\rho'_k=\alpha_k\rho_k\alpha_k^{-1}$ converge. 
We now replace the original sequence $(\rho_k,\Omega_k)$ by the sequence $(\rho'_k,\Omega'_k)$.

Let $K=K(n)$ be given by (\ref{Klemma}). There is a unique positive diagonal matrix $D_k$
such that $Q_k=D_k^{-2}$. Set $\Omega'_k=D_k\Omega_k$, then $Q(\Omega'_k)=x_1^2+\cdots+x_n^2$.
By (\ref{Klemma}), there is $K>1$ depending only on $n$, such that $$K^{-1}\boxset\subset \Omega'_k\subset K\cdot\boxset$$
Given $g\in\pi_1M$ then $A=A(k,g)=\rho_k(g)\in \SL^{\pm}(n+1,{\mathbb R})$ preserves $\Omega_k$.
 The matrix $B=B(k,g)=D_k A(k,g) D_k^{-1}$ preserves $\Omega'_k$. By (\ref{Klemma})  $$\forall i,j\quad |B_{i,j}|\le K\cdot |B_{n+1,n+1}|$$ Since $D_k$ is diagonal it follows that $$B_{n+1,n+1}=A_{n+1,n+1}$$
The key to our approach is that this estimate is not affected by how large $D_k$ is.
Now $A_{n+1,n+1}=A(k,g)_{n+1,n+1}$ converges as $k\to\infty$ for each $g$. Hence the entries of $B(k,g)$ are uniformly bounded for fixed
$g$ as $k\to\infty$.
 Thus we may pass to a subsequence where $B(k,g)=D_k\rho_k(g)D_k^{-1}$ converges
for every $g\in\pi_1M$, and this gives a limiting representation $\sigma=\lim D_k\rho_kD_k^{-1}$. 

The subspace of $\Pcal$ consisting of properly convex $\Omega$
with $K^{-1}\cdot \boxset\subset\Omega\subset K\cdot\boxset$ is compact. Therefore there is a subsequence
 so that
$\Omega=\lim\Omega_k'$ exists. 
 Then $K^{-1}\boxset\subset \Omega'\subset K\cdot\boxset$
so $\Omega'$ is properly convex, and $\dim\Omega'=n$, and $\sigma$
preserves $\Omega'$.
Clearly $\chi(\beta_k\rho_k \beta_k^{-1})=\chi(\rho_k)$,
and taking limits gives
 $\chi(\sigma)=\chi(\rho_{\infty})$.
\end{proof}

Suppose $G$ is a finitely generated group. Let $\Ccal(G)\subset \Hom(G,\PGL(n+1,\RR))$ be the subset of all $\rho$
such that that there exists a properly convex $\Omega\subset\RP^n$ with $\dim\Omega=n$ and $\rho(G)$ preserves $\Omega$. {\em Observe that the trivial representation has this property.}
The following makes no assumption about faithful or discrete.

\begin{theorem}\label{closed1}  If $G$ is finitely generated then $\chi(\Ccal(G))$
 is a closed subset of $\Xcal(G)$.
\end{theorem}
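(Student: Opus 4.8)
The plan is to deduce the closedness of $\chi(\Ccal(G))$ entirely from the degeneration-avoidance Proposition \ref{nocollapse}, whose hypothesis is honest convergence of representations in $\Hom(G,\PGL(n+1,\RR))$; all the geometry (the box estimate and the normalization of degenerating domains) is already packaged there, so the only remaining work is representation-theoretic bookkeeping.

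First I would reduce to a sequential statement. Since $G$ is finitely generated and $\PGL(n+1,\RR)$ is reductive, the ring of conjugation-invariant regular functions on the semialgebraic set $\Hom(G,\PGL(n+1,\RR))$ is finitely generated, and a finite generating set embeds $\Xcal(G)$ as a semialgebraic subset of some $\RR^N$; in particular $\Xcal(G)$ is metrizable and closedness may be tested along sequences. Thus it suffices to take $\rho_k\in\Ccal(G)$ with $\chi(\rho_k)\to x_\infty$ and prove $x_\infty\in\chi(\Ccal(G))$. Here I would also record the two facts that make conjugation harmless: $\chi$ is conjugation-invariant by definition, and $\Ccal(G)$ is conjugation-invariant, because if $\rho$ preserves a properly convex $\Omega$ with $\dim\Omega=n$ then $g\rho g^{-1}$ preserves $g\Omega$, which is again properly convex of dimension $n$. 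So I am free to replace each $\rho_k$ by any conjugate.

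Second --- the key step --- I would upgrade convergence of characters to convergence of representations up to conjugacy. The claim is that, after passing to a subsequence, there are $g_k\in\PGL(n+1,\RR)$ and a representation $\rho_\infty$ with $g_k\rho_k g_k^{-1}\to\rho_\infty$ in $\Hom$ and $\chi(\rho_\infty)=x_\infty$. Choosing $\rho_\infty$ to be a polystable (closed-orbit) representative of $x_\infty$, this lifting is a statement of real geometric invariant theory for the reductive group $\PGL(n+1,\RR)$, following Richardson and Slodowy: near a closed orbit the quotient map is proper enough that any sequence whose images converge to $x_\infty$ admits conjugates accumulating on that closed orbit, with no invariant data lost in the limit. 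Equivalently, one may phrase the whole theorem as the assertion that $\chi^{-1}(\chi(\Ccal(G)))$ is closed in $\Hom$, and this lifting is exactly what is needed to control that preimage. With $\rho_k':=g_k\rho_k g_k^{-1}\in\Ccal(G)$ and $\rho_k'\to\rho_\infty$ in hand, I would then apply Proposition \ref{nocollapse} directly to $(\rho_k',\Omega_k')$: it yields $\beta_k$ and $\sigma$ with $\beta_k\rho_k'\beta_k^{-1}\to\sigma$, where $\sigma$ preserves a properly convex $\Omega$ with $\dim\Omega=n$ and $\chi(\sigma)=\chi(\rho_\infty)=x_\infty$. Hence $\sigma\in\Ccal(G)$ and $x_\infty=\chi(\sigma)\in\chi(\Ccal(G))$, as required.

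I expect the hard part to be precisely this second step. The subtlety is that $\Ccal(G)$ itself is in general \emph{not} closed in $\Hom$: along a convergent sequence the domains $\Omega_k$ may genuinely degenerate, so the naive limit $\rho_\infty$ need not lie in $\Ccal(G)$, and one cannot avoid moving the limit by the wandering conjugators (first $g_k$, then $\beta_k$). The role of choosing $\rho_\infty$ in the closed orbit over $x_\infty$ is exactly to guarantee that such conjugators exist and that the lifting lands on a representation with the correct character, at which point the geometric content of Proposition \ref{nocollapse} supplies the invariant properly convex set and closes the argument.
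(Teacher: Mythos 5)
Your proposal is correct and is, in substance, the argument the paper intends: the paper states Theorem \ref{closed1} without any written proof, as a direct consequence of Proposition \ref{nocollapse}, and your proof supplies exactly the missing bookkeeping --- namely that convergence of characters must first be upgraded, via Richardson--Slodowy real GIT, to convergence of conjugated representatives in $\Hom(G,\PGL(n+1,\RR))$ before Proposition \ref{nocollapse} can be applied, after which conjugation-invariance of $\Ccal(G)$ and $\chi(\sigma)=\chi(\rho_\infty)$ finish the job. Two refinements would make your key step airtight: first, since the orbits of the $\rho_k$ need not be closed, no conjugate of $\rho_k$ need lie in the Kempf--Ness set, so the lifting should pass through Kempf--Ness representatives $w_k$ of the unique closed orbits in the orbit closures of the $\rho_k$ (these satisfy $\chi(w_k)=\chi(\rho_k)$), extract $w_k\to w_\infty$ from properness of the quotient map restricted to the Kempf--Ness set, and then choose $g_k$ with $g_k\rho_k g_k^{-1}$ within distance $1/k$ of $w_k$, so that $g_k\rho_k g_k^{-1}\to w_\infty$; second, over $\RR$ invariant polynomials need not separate closed orbits, so metrizability of $\Xcal(G)$ (hence the sequential criterion) is better quoted from the Richardson--Slodowy homeomorphism of the quotient with the Kempf--Ness set modulo the maximal compact subgroup than from an embedding into $\RR^N$.
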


Let $\Dcal(G)\subset\Ccal(G)$ be the subset of discrete faithful representations. 
The limit of discrete faithful representations is not always discrete and faithful
without an additional hypothesis on $G$.

\begin{theorem}\label{closed2} Suppose $G$ is a finitely generated group that does
 not contain a non-trivial normal abelian subgroup.
  Let $\Dcal(G)\subset\Hom(G,\PGL(n+1,{\mathbb R)})$ be the subset of holonomies
 of properly-convex $n$-manifolds with fundamental group $G$. Then the
 image $\Dcal(G)$ is closed in the character variety.

\end{theorem}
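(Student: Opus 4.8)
The plan is to show that the closure of $\chi(\Dcal(G))$ is contained in $\chi(\Dcal(G))$. So fix $\rho_k\in\Dcal(G)$, each the holonomy of a properly convex manifold $\Omega_k/\rho_k(G)$, with $\chi(\rho_k)\to x_\infty$ in $\Xcal(G)$; I must produce $\rho_\infty\in\Dcal(G)$ with $\chi(\rho_\infty)=x_\infty$. Note first that $G$ is torsion-free, since each $\Omega_k/\rho_k(G)$ is a properly convex, hence aspherical, manifold with fundamental group $G$. Because $\Dcal(G)\subset\Ccal(G)$ and $\chi(\Ccal(G))$ is closed by Theorem \ref{closed1}, the point $x_\infty$ is already the character of a representation preserving a properly convex set; the content of the theorem is to promote this to a discrete, faithful representation. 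To apply Proposition \ref{nocollapse} I first need the representations, not merely their characters, to converge: since each $\rho_k$ is irreducible by \cite{CT2}(0.2), it is a stable point of $\Hom(G,\PGL(n+1,\RR))$, so after conjugating and passing to a subsequence I may assume $\rho_k\to\rho_\infty$ with $\chi(\rho_\infty)=x_\infty$. Proposition \ref{nocollapse} now yields, after a further subsequence and conjugation, a properly convex $\Omega$ with $\dim\Omega=n$, together with $\beta_k\Omega_k\to\Omega$ and $\sigma=\lim\beta_k\rho_k\beta_k^{-1}$ satisfying $\chi(\sigma)=x_\infty$ and $\sigma(G)$ preserving $\Omega$. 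This is exactly the non-degeneration step that the box estimate (\ref{boxestimate}) and Lemma \ref{Klemma} were built to deliver.

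It remains to show $\sigma\in\Dcal(G)$. Since $G$ is torsion-free and $\sigma(G)$ preserves the nondegenerate properly convex $\Omega$, it suffices to prove that $\sigma$ is faithful and discrete: properness of the action is automatic because $\operatorname{Aut}(\Omega)$ acts properly on the Hilbert metric, and freeness then follows because a discrete subgroup contains no infinite-order element fixing a point of $\Omega$ (point-stabilisers in $\operatorname{Aut}(\Omega)$ are compact), so that $\Omega/\sigma(G)$ is a properly convex manifold with fundamental group $G$. For faithfulness I would argue by contradiction: the kernel $N=\ker\sigma$ is normal in $G$, and if nontrivial it is infinite since $G$ is torsion-free; but every $g\in N$ satisfies $\beta_k\rho_k(g)\beta_k^{-1}\to\Id$, so by a Margulis-lemma argument in the Hilbert geometry of $\Omega$ the elements of $N$ lie, for large $k$, in virtually nilpotent thin subgroups of uniformly bounded complexity, which forces $N$ itself to be virtually nilpotent. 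Then the Fitting subgroup of $N$ is a nontrivial nilpotent group, characteristic in $N$ and hence normal in $G$, and its centre is a nontrivial normal abelian subgroup of $G$, contradicting the hypothesis. Discreteness is attacked by the same collapse dichotomy.

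The main obstacle is precisely this non-collapse step, and within it the discreteness of $\sigma$. Non-degeneration of the domain is already secured by Proposition \ref{nocollapse}, but one must still exclude the group-theoretic collapse in which short translations of $\sigma(G)$ accumulate at the identity. The subtlety, compared with faithfulness, is that a failure of discreteness a priori produces only a nontrivial connected nilpotent subgroup $L=\overline{\sigma(G)}^{0}\subset\PGL(n+1,\RR)$ that is normalised by $\sigma(G)$ and preserves $\Omega$, and that need not lie in $\sigma(G)$; the work is to use the geometry of the properly convex $\Omega$ to trace $L$ back to a nontrivial normal abelian subgroup of $G$, whereupon the hypothesis again gives a contradiction. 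A Margulis-lemma-type control of the thin part of the Hilbert geometry of $\Omega$, uniform in the dimension $n$, is the technical heart of this final step.
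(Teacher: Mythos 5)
Your reduction to Proposition (\ref{nocollapse}) matches the paper's strategy, but your proof then stalls exactly where the real content lies. The paper disposes of the remaining step in one line: if $\beta_k\rho_k\beta_k^{-1}\to\sigma$ with all $\rho_k$ discrete and faithful, and $G$ has no non-trivial normal abelian subgroup, then $\sigma$ is discrete and faithful by \cite{CT2}(1.2). You instead attempt to prove this from scratch via ``a Margulis-lemma argument in the Hilbert geometry'' together with a Fitting-subgroup analysis, and you concede in your final paragraph that the discreteness half --- tracing the connected nilpotent group $L=\overline{\sigma(G)}^{0}$ back to a non-trivial normal abelian subgroup of $G$ --- is ``the technical heart'' that you have not carried out. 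That is a genuine gap, not a routine verification: it is precisely the Chuckrow-type theorem for convex projective geometry that \cite{CT2}(1.2) encapsulates. A Margulis lemma for Hilbert geometries does exist in the literature, but deducing from it that a non-discrete or non-faithful limit forces a normal abelian subgroup (including the case where $L$ meets $\sigma(G)$ trivially, which you correctly flag) is substantive work that your proposal describes rather than performs; without it, $\sigma$ is not known to lie in $\Dcal(G)$.

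A second, smaller problem: you justify passing from convergence of characters $\chi(\rho_k)\to x_\infty$ to convergence of conjugated representations by asserting that each $\rho_k$ is irreducible ``by \cite{CT2}(0.2)''. That citation, as used in the paper's proof of (\ref{closed3}), requires in addition that $M$ be closed or that $\pi_1M$ contain $\ZZ^{n-1}$ as a subgroup of infinite index; neither hypothesis is available in (\ref{closed2}), whose only assumptions are finite generation and the absence of a non-trivial normal abelian subgroup. So irreducibility, hence stability, of $\rho_k$ is not established, and this lifting step needs a different justification --- for instance a Richardson--Slodowy-type properness argument for representations with closed orbit, or simply phrasing the whole argument, as the paper implicitly does, for a sequence that already converges in $\Hom(G,\PGL(n+1,\RR))$ and recording that only the character of the limit is controlled.
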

\begin{proof}
 With the setup of (\ref{nocollapse}) if $\rho_k\to\sigma$ and all the $\rho_k$ are discrete faithful, then  $\sigma$ is discrete and faithful by  \cite{CT2}(1.2).
\end{proof}

We would like to know that the set of discrete faithful
 representations are closed, rather than just the image of this set in
the character variety. 
However  the set of holonomies of  properly convex structures
on a given manifold $M$ is not always a closed subset of the representation variety. 
In general $\sigma$ and $\rho_{\infty}$ in (\ref{closed2}) are not conjugate. An additional group theoretic assumption on
$\pi_1M$ suffices to ensure that $\sigma$ and $\rho_{\infty}$ are irreducible, and therefore conjugate.
When $M$ is closed, Benoist showed that it suffices that $\pi_1M$ has virtually trivial center suffices. We extend this result.

\begin{proof}[Proof of (\ref{closed3})] With the notation of (\ref{closed2}) we may assume $\lim\rho_k=\rho_{\infty}$
and $\rho_{\infty}$ is conjugate to $\sigma$.
Since $\pi_1M$ contains no non-trivial normal abelian subgroup, and either $M$ is closed or contains
$\ZZ^{n-1}$ with infinite index, then $\sigma$ is irreducible by  \cite{CT2}(0.2). 
Now $\chi(\rho_{\infty})=\chi(\sigma)$, and an irreducible representation is determined
up to conjugacy by the character, from which it follows that $\rho_{\infty}$ is conjugate to $\sigma$.
Thus, after conjugating the original sequence, and taking a subsequence, $\lim \Omega_k=\Omega_{\infty}$
and $\lim\rho_k=\rho_{\infty}$. Then $\Omega_{\infty}/\rho_{\infty}(\pi_1M)$ is a manifold that is homotopy equivalent to $M$.
\end{proof}

We wish to show that the limit manifold $N=\Omega_{\infty}/\rho_{\infty}(\pi_1M)$ is homeomorphic, rather than just
 homotopy equivalent,  to $M$. If $M$ is closed, then this is guaranteed using Gromov-Hausdorff convergence.
 When $M$ is not compact then in general there are counterexamples.
However, if the ends of $M$ are {\em generalized cusps} then, since $N$ is 
homotopy equivalent to $M$,
 the ends of $N$ are also generalized cusp. This ensures $M$ and $N$ are homeomorphic, as is shown below.

\begin{proof}[Proof of Theorem \ref{closed4}]  We wish to apply (\ref{closed3}). If $M$ is not closed,
 then $N$ has a boundary component $A$ and $\pi_1A$ contains a subgroup $G\cong\ZZ^{n-1}$.
Then $|\pi_1M:G|=\infty$
 otherwise  the intersection of the conjugates of $G$ is a non-trivial
normal abelian subgroup of $\pi_1M$. Thus we may apply  (\ref{closed3}) with this $G$.

In what follows if $X$ and $Y$ are spaces then $X\cong Y$ means $X$ is homeomorphic to $Y$.
Suppose $\rho\in \cl(\Rep_{ce}(M))$. Now $\Rep_{ce}\subset\Dcal(\pi_1M)$ 
and   $\Dcal(\pi_1M)$ is closed by (\ref{closed3}) so $\rho\in\Dcal(\pi_1M)$. Thus
 $\Gamma=\rho(\pi_1M)$ preserves a properly convex 
domain $\Omega(\rho)$.
Let $N=\Omega(\rho)/\Gamma$ be the corresponding manifold.
Then by \cite{CT2}(0.1) the ends of $N$ are generalized cusps.
By \cite{CLT2}(0.2) for $\sigma\in\Rep_{ce}(M)$ close to $\rho$ there is a properly convex manifold 
$N_{\sigma}=\Omega(\sigma)/\sigma(\pi_1M)\cong N$.
Since $\sigma\in\Rep_{ce}(M)$ there is also $\Omega'$ with $M_{\sigma}=\Omega'/\sigma(\pi_1M)\cong M$. 
The minimal convex submanifolds of $M_{\sigma}$ and $N_{\sigma}$ are the same by \cite{CT2}(3.5)(9), so 
$M\cong M_{\sigma}\cong N_{\sigma}\cong N$.
Thus $\rho\in\Rep_{ce}(M)$.
\end{proof}

\small
\bibliography{Koszulrefs.bib} 

\begin{thebibliography}{10}

\bibitem{MR1411128}
J.~W. Anderson and R.~D. Canary.
\newblock Algebraic limits of {K}leinian groups which rearrange the pages of a
  book.
\newblock {\em Invent. Math.}, 126(2):205--214, 1996.

\bibitem{ball}
K.~Ball.
\newblock Ellipsoids of maximal volume in convex bodies.
\newblock {\em Geom. Dedicata}, 41(2):241--250, 1992.

\bibitem{BCL1}
S.~A. Ballas, D.~Cooper, and A.~Leitner.
\newblock Generalized cusps in real projective manifolds: classification.
\newblock {\em J. Topol.}, 13(4):1455--1496, 2020.

\bibitem{BCL2}
S.~A. Ballas, D.~Cooper, and A.~Leitner.
\newblock The moduli space of marked generalized cusps in real projective
  manifolds, 2020.

\bibitem{Ben5}
Y.~{Benoist}.
\newblock {Convexes divisibles. III.}
\newblock {\em Ann. Sci. Ecole Norm. Sup. (4)}, 38(5):793–832, 2005.

\bibitem{Benz}
J.-P. Benz{\'e}cri.
\newblock Sur les vari\'et\'es localement affines et localement projectives.
\newblock {\em Bull. Soc. Math. France}, 88:229--332, 1960.

\bibitem{CHOIGS}
S.~Choi.
\newblock Geometric structures on orbifolds and holonomy representations.
\newblock {\em Geom. Dedicata}, 104:161--199, 2004.

\bibitem{Choi1}
S.~{Choi}.
\newblock {The convex real projective manifolds and orbifolds with radial ends:
  the openness of deformations}.
\newblock {\em ArXiv e-prints}, Nov. 2010.

\bibitem{choiends}
S.~{Choi}.
\newblock {The classification of ends of properly convex real projective
  orbifolds II: Properly convex radial ends and totally geodesic ends}.
\newblock {\em ArXiv e-prints}, Jan. 2015.

\bibitem{MR3715441}
S.~Choi.
\newblock The convex real projective orbifolds with radial or totally geodesic
  ends: a survey of some partial results.
\newblock In {\em In the tradition of {A}hlfors-{B}ers. {VII}}, volume 696 of
  {\em Contemp. Math.}, pages 51--85. Amer. Math. Soc., Providence, RI, 2017.

\bibitem{CW2}
S.~{Choi} and W.~{Goldman}.
\newblock {Convex real projective structures on closed surfaces are closed.}
\newblock {\em Proc. Amer. Math. Soc.}, pages 657--661, 1993.

\bibitem{ChoiGoldDef}
S.~Choi and W.~M. Goldman.
\newblock The deformation spaces of convex {$\Bbb{RP}^2$}-structures on
  2-orbifolds.
\newblock {\em Amer. J. Math.}, 127(5):1019--1102, 2005.

\bibitem{MR3888622}
S.~Choi, G.-S. Lee, and L.~Marquis.
\newblock Deformations of convex real projective manifolds and orbifolds.
\newblock In {\em Handbook of group actions. {V}ol. {III}}, volume~40 of {\em
  Adv. Lect. Math. (ALM)}, pages 263--310. Int. Press, Somerville, MA, 2018.

\bibitem{CLT2}
D.~Cooper, D.~Long, and S.~Tillmann.
\newblock Deforming convex projective manifolds.
\newblock {\em Geom. Topol.}, 22(3):1349--1404, 2018.

\bibitem{CLT1}
D.~Cooper, D.~D. Long, and S.~Tillmann.
\newblock On convex projective manifolds and cusps.
\newblock {\em Adv. Math.}, 277:181--251, 2015.

\bibitem{CT2}
D.~{Cooper} and S.~{Tillmann}.
\newblock {On properly-convex real-projective manifolds with Generalized
  Cusps}.
\newblock {\em in preparation}, 2020.

\bibitem{Fritz}
F.~John.
\newblock Extremum problems with inequalities as subsidiary conditions.
\newblock In {\em Studies and {E}ssays {P}resented to {R}. {C}ourant on his
  60th {B}irthday, {J}anuary 8, 1948}, pages 187--204. Interscience Publishers,
  Inc., New York, N. Y., 1948.

\bibitem{Kim}
I.~{Kim}.
\newblock {Rigidity and deformation spaces of strictly convex real projective
  structures on com- pact manifolds.}
\newblock {\em J. Differential Geom.}, pages 189--2181, 2001.

\bibitem{Marseglia}
S.~{Marseglia}.
\newblock {Varietes projectives convexes de volume fini, see
  https://tel.archives-ouvertes.fr/tel-01556266v2}.
\newblock {\em proivate communication}, 2018.

\end{thebibliography}
\bibliographystyle{abbrv} 

\end{document}